\newtheorem{defin}{Definition}
\newtheorem{lemma}[defin]{Lemma}
\newtheorem{prop}[defin]{Proposition}
\newcommand{\str}[1]{\StrLen{#1}[\tmplength]\ifthenelse{1=\tmplength{}}{\mathcal{#1}}{\underline{#1}}}
\newcommand{\M}{\mathcal{M}}
\newcommand{\stru}[2]{\lara{#1,#2}} 
\newcommand{\thn}[1]{\mathrm{#1}} 
\newcommand{\DeLOs}[2]{{\tiny\ifthenelse{\equal{#1}{#2}}{\thn{DeLO}^{#1}}{\ifthenelse{\equal{#2}{}}{\thn{DeLO}^{#1}}{\thn{DeL0}^{\vpair{$#1$}{$#2$}}}}}}
\newcommand{\limp}{\rightarrow}
\newcommand{\IFF}{\Leftrightarrow}
\newcommand{\PAK}{\Rightarrow}
\newcommand{\KAP}{\Leftarrow}
\newcommand{\N}{\mathbb{N}}
\newcommand{\Z}{\mathbb{Z}}
\newcommand{\Q}{\mathbb{Q}}
\newcommand{\lr}[1]{\{#1\}}
\newcommand{\lara}[1]{\langle #1\rangle}
\newcommand{\set}[2]{\lr{#1;#2}}
\newcommand{\sdiff}{-}
\newcommand{\suni}{\cup}
\newcommand{\Suni}{\bigcup}
\newcommand{\sbs}{\subseteq}
\newcommand{\fset}[2]{{}^{#1}#2} 
\newcommand{\factor}[2]{#1/{#2}}
\newcommand{\poten}[1]{\mathcal{P}(#1)} 
\newcommand{\id}[1]{id_{#1}}
\newcommand{\restr}{\upharpoonright}
\newcommand{\uvz}[1]{``#1''} 
\newcommand{\menumf}{\mathrm{m}}
\newcommand{\menum}[1]{\menumf(#1)}
\newcommand{\guessf}{g}
\newcommand{\guess}[1]{\guessf(#1)}
\newcommand{\hnsa}[4]{\mathtt{hNsA}(#1,#2,#4)} 
\newcommand{\hnsf}[4]{\mathtt{hNsF}(#1,#2,#4)} 
\newcommand{\hbsf}[4]{\mathtt{hBsF}(#1,#2,#4)} 
\begin{document}

\title{A note on the problem of prisoners and hats}

\abstract{We study the famous mathematical puzzle of prisoners and hats. We introduce a framework in which various variants of the problem can be formalized. We examine three particular versions of the problem (each one in fact a class of problems) and completely characterize them as to (non)existence of winning strategies.}

\keywords{prisoners and hats, hat guessing, game theory}

\subjclass[2010]{Primary 91A05; Secondary 91A43, 00A08}
   
          
\author{\textsc{Petr Glivick\'{y}}}
\address{\textsc{Petr Glivick\'{y}}: Department of Mathematics, Faculty of Informatics and Statistics, University of Economics, Prague, Ekonomická~957, 148~00 Praha~4, Czech Republic}
}

\thanks{This paper was processed with contribution of long term institutional support of research activities by Faculty of Informatics and Statistics, University of Economics, Prague.}

\maketitle

\section{Introduction}
Problems that can be collectively denoted as \uvz{puzzles of prisoners and hats} have been studied since 1960's (see e.g. \cite{Gar61}). The common idea of all such problems can be informally stated as follows (we give a more precise formulation below): A group of prisoners is asked to guess colors of hats they are wearing. In general, of course, they are not allowed to look at their own hats, but may be allowed to see some of the hats that others are wearing and hear the guesses of some of their colleagues. The goal of the prisoners usually is to maximize the number of correct guesses. No communication among the prisoners is allowed since the moment the hats are placed on their heads. However they have an opportunity to devise a common strategy in advance.

This may seem as just a recreational mathematical puzzle. Nevertheless certain variants of the problem have been studied and applied in contexts as distinct as coding theory \cite{Bus12}, set theory \cite{GP76} or theory of auctions \cite{AFGHIS05}.

A recent and rather informal introduction to the topic (focused on the infinite case) is \cite{HT08}. The finite case is studied in more detail in \cite{BHKL08}. An extensive treatise on the subject is \cite{HTbook}.

\medskip

In this paper we describe a framework in which the various versions of the problem of prisoners and hats can be formalized in a unified way (section \ref{sect:theproblem}). We use this framework to classify some of the important cases with regard to existence of a winning strategy for the prisoners. In particular we will be interested in the \uvz{hear nothing, see all} case (section \ref{sect:hnsa}) and two cases where prisoners can be thought of as standing in a line --- the \uvz{hear nothing, see forward} case (section \ref{sect:hnsf}) and the \uvz{hear backward, see forward} case (section \ref{sect:hbsf}). This, of course, is just the tip of the iceberg. A complete classification of prisoners and hats problems is an extensive but interesting project and an inspiration for further research.

\newpage
\subsection{Preliminaries and notation}

\subsubsection{Basic notation}
$\fset{x}{y}$ denotes the set $\set{f}{f:x\rightarrow y}$ of all functions from $x$ to $y$. $\id{X}$ denotes the identity function on $X$. A natural number $n$ and a function $f$ are understood in the usual set-theoretical sense, i.e. $n=\lr{0,\ldots,n-1}$ and $f=\set{(x,y)}{f(x)=y}$.

\subsubsection{Directed graphs}
In a directed graph $G = \stru{V}{E}$ a direction of an edge $(v',v)\in E$ is understood as from $v'$ to $v$.
A directed path in $G$ is also called ascending path, while a descending path in $G$ is a directed path in the graph $\stru{V}{E^{-1}}$.

It is easy to see that if $G$ has no infinite descending paths, then there is a well-ordering $\prec\supseteq E$ on $V$.

\subsection{The prisoners and hats problem}
\label{sect:theproblem}
The problem we want to investigate is based on the following story: 

\subsubsection{A story: Mathematicians and an evil hatter}
A nonempty set $M$ of mathematicians is captured by an evil hatter who announces that the next day they will play a hat guessing game with their freedom (if they win) or death (if they lose) at stake. 
The hatter will place differently colored hats, with colors coming from a nonempty set $C$, onto mathematicians heads and the unlucky prisoners will be forced to guess what color they will be wearing.
The malevolent captor reveals some details about the rules in advance:

\medskip
 
The game starts with the hatter choosing an assignment $a:M\rightarrow C$ from certain subset $A\sbs\fset{M}{C}$ of allowed assignments of hat colors to the mathematicians. He then places hats accordingly onto mathematicians heads and decides that a mathematician $m$ will be allowed to see the hat his colleague $m'$ is wearing if and only if $(m',m)\in S$ for certain binary relation $S\sbs M^2$.


The hatter then follows some well-ordering $\prec$ that extends the canonical partial ordering of certain directed graph $\stru{T}{H}$ with no infinite descending 
paths whose vertices are labeled by mathematicians via some $\menumf:T\rightarrow M$ (not necessarily one-to-one nor onto) and at time $t\in T$ he asks the mathematician $\menum{t}$ for her fateful guess (a color from $C$) which he then records as $\guess{t}$. Before $\menum{t}$ is asked at time $t$, she will be allowed to hear the record of all guesses $\guess{t'}$ such that $(t',t)\in H$. After her guess, $\menum{t}$'s memory is erased so that she doesn't remember what she heard nor what her own guess was. (It is easy to see that from the point of view of the mathematicians, the choice of $\prec\supseteq H$ is irrelevant.)

 

After the whole collection $\guessf:T\rightarrow C$ of guesses is obtained, the hatter compares it with the actual assignment $a$ using an evaluation function $e:A\times\fset{T}{C}\rightarrow 2$ and he sets the mathematicians free if $e(a,\guessf)=1$ or executes them if $e(a,\guessf)=0$.

\medskip

The hatter tells the mathematicians what $C$, $A$, $S$, $\stru{T}{H}$, $\menumf$ and $e$ he is going to use and leaves them for the night. They now have some time to find an optimal strategy that maximizes their chances of surviving.

\subsubsection{The formalization of the problem}

An instance (of the prisoners and hats problem) is an tuple $\mathcal{I}=(M,C,A,S,T,H,\menumf,e)$ where
\begin{itemize}
\item $M$ and $C$ are nonempty sets,
\item $\emptyset\neq A\sbs \fset{M}{C}$,
\item $S\sbs M^2$,
\item $\stru{T}{H}$ is a directed graph with no infinite descending paths,
\item $\menumf:T\rightarrow M$,
\item $e:A\times\fset{T}{C}\rightarrow 2$.
\end{itemize}

\subsubsection{Strategies}

There is a game $\mathcal{G}_{\mathcal{I}}$ of two players (the hatter and the group of mathematicians) with imperfect information that corresponds naturally (according to the story) to the instance $\mathcal{I}$.

We are not going to define $\mathcal{G}_{\mathcal{I}}$ here in detail. 
%
We just state, in a form of a definition, what a strategy for mathematicians in this game is: 

A strategy (for $\mathcal{I}$) is any function $\sigma:X\rightarrow C$ where $X$ is the set of all triples $(t,\alpha,\gamma)$ where $t\in T$, 
$\alpha = \alpha' \restr S^{-1}[\menum{t}]$ for some $\alpha'\in A$,
$\gamma: H^{-1}[t] \rightarrow C$.
Any strategy $\sigma$ together with a choice of $a\in A$ uniquely determine the course of the game (i.e. the collection of all guesses $\guessf=\guessf_{a,\sigma}:T\rightarrow C$) and thus the result $e(a,\guessf)$:
\begin{lemma}
For any strategy $\sigma$ and $a\in A$ there is the unique $\guessf=\guessf_{a,\sigma}:T\rightarrow C$ such that for every $t\in T$
$$\guess{t}=\sigma(t,a\restr S^{-1}[\menum{t}],\guessf\restr H^{-1}[t]).$$
\end{lemma}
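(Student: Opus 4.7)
The plan is to construct $\guessf_{a,\sigma}$ by transfinite recursion along a well-ordering $\prec$ on $T$ extending $H$, whose existence was noted in the preliminaries from the fact that $\stru{T}{H}$ has no infinite descending paths. Since $H \sbs {\prec}$, we have $H^{-1}[t] \sbs \set{t'\in T}{t'\prec t}$ for every $t \in T$, so any value $\guess{t}$ determined by the recursion equation depends only on values $\guess{t'}$ with $t' \prec t$.

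More concretely, I would define by transfinite recursion on $\prec$ a function $\guessf:T \rightarrow C$ by setting, at stage $t$,
$$\guess{t} := \sigma\bigl(t,\, a\restr S^{-1}[\menum{t}],\, \guessf\restr H^{-1}[t]\bigr).$$
The right-hand side is well-defined because $a\restr S^{-1}[\menum{t}]$ satisfies the required form (with $\alpha' = a \in A$) and $\guessf\restr H^{-1}[t]$ is already determined by the recursion hypothesis. Hence the triple lies in the domain $X$ of $\sigma$, and $\sigma$ returns an element of $C$. This produces the desired $\guessf$ verifying the equation for all $t \in T$.

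For uniqueness, suppose $\guessf_1$ and $\guessf_2$ both satisfy the equation. I would show by $\prec$-induction that $\guessf_1(t) = \guessf_2(t)$ for every $t$: assuming $\guessf_1(t') = \guessf_2(t')$ for all $t' \prec t$, in particular for all $t' \in H^{-1}[t]$, we get $\guessf_1\restr H^{-1}[t] = \guessf_2\restr H^{-1}[t]$, and then the defining equation forces $\guessf_1(t) = \guessf_2(t)$. Since $\prec$ is a well-ordering, this induction covers all of $T$.

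No step looks delicate: the only point worth highlighting is that the construction is independent of the particular choice of the well-ordering $\prec \supseteq H$, which is consistent with the parenthetical remark in the story that the choice of $\prec$ is irrelevant from the mathematicians' point of view. The result $\guessf_{a,\sigma}$ is an intrinsic function of $a$ and $\sigma$ alone.
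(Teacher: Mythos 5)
Your proof is correct and follows essentially the same route as the paper's: transfinite recursion along a well-ordering $\prec\supseteq H$ (which exists because $\stru{T}{H}$ has no infinite descending paths), using $H^{-1}[t]\sbs\set{t'\in T}{t'\prec t}$ to justify the recursion, with uniqueness by $\prec$-induction. Your closing remark about independence from the choice of $\prec$ is a nice bonus but not needed for the statement itself.
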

\begin{proof}
As $\stru{T}{H}$ does not contain infinite descending paths, there is a well-ordering $\prec\supseteq H$ on $T$. Let $(t_{\alpha})_{\alpha\in\delta}$ for some ordinal $\delta$ be an enumeration of $T$ such that $t_\alpha\prec t_\beta$ if and only if $\alpha<\beta$. We construct $\guessf:T\rightarrow C$ by recursion:
$$
\guess{t_\alpha}=\sigma(t_\alpha,a\restr S^{-1}[\menum{t_\alpha}],\guessf\restr H^{-1}[t_\alpha]),
$$
which is correct since $H^{-1}[t_\alpha]\sbs\prec^{-1}[t_\alpha]\sbs\set{t_\beta}{\beta<\alpha}$.

The uniqueness of $\guessf$ can be easily proven by induction.
\end{proof}

A strategy $\sigma$ for $\mathcal{I}$ is winning if $e(a,\guessf_{a,\sigma})=1$ for every $a\in A$.

\smallskip

Strategies for \uvz{disjoint} instances can be combined to give a strategy for their \uvz{union}: Let $I$ be a nonempty set and for $i\in I$ let $\sigma_i$ be a strategy for certain instance $\mathcal{I}_i=(M_i,C_i,A_i,S_i,T_i,H_i,\menumf_i,e_i)$. Assume that the sets $T_i$, for $i\in I$, are pairwise disjoint and let
\begin{itemize}
\item $M=\Suni_{i\in I}M_i$, 
\item $C=\Suni_{i\in I}C_i$,
\item $A\sbs\set{a\in\fset{M}{C}}{(\forall i\in I)(a\restr M_i\in A_i)}$,
\item $S\supseteq\Suni_{i\in I}S_i$,
\item $T=\Suni_{i\in I}T_i$,
\item $H\supseteq\Suni_{i\in I}H_i$,
\item $\menumf=\Suni_{i\in I}\menumf_i$,
\item $e:A\times\fset{T}{C}\rightarrow 2$.
\end{itemize}

Then there is a strategy $\sigma$ for the instance $\mathcal{I}=(M,C,A,S,T,H,\menumf,e)$ given by 
$$\sigma:(t,\alpha,\gamma)\mapsto \sigma_i(t,\alpha\restr S_i^{-1}[\menumf_i(t)],\gamma\restr H_i^{-1}[t])$$ 
where $i\in I$ is the unique index such that $t\in T_i$. We call such $\sigma$ the ($\mathcal{I}$-)combination of strategies $\sigma_i$, $i\in I$, and denote it by $\bigsqcup_{i\in I}^{\mathcal{I}}\sigma_i$ or just $\bigsqcup_{i\in I}\sigma_i$.

\section{A set-like case}
In this paper, we are going to deal only with cases when every mathematician is asked for her guess exactly once, i.e. when $\menumf:T\rightarrow M$ is a bijection. Then we can simplify our notation by assuming $T=M$, $\menumf=\id{T}$ and by omitting $T$ and $\menumf$ from the instances, writing them just as $\mathcal{I}=(M,C,A,S,H,e)$. Also we will always have $A=\fset{M}{C}$, which will allow for another simplification in notation later on.

The following evaluation functions will be used throughout the rest of this paper: 
For a cardinal number $\kappa$ we define evaluation functions $e_{\geq \kappa}$ (at least $\kappa$ correct guesses) and $e^{<\kappa}$ (less than $\kappa$ incorrect guesses) by 
\begin{align*}
e_{\geq \kappa}(a,\guessf)=1 & \IFF |\set{m\in M}{a(m)=\guess{m}}|\geq \kappa,\\
e^{<\kappa}(a,\guessf)=1 & \IFF |\set{m\in M}{a(m)\neq\guess{m}}|<\kappa.
\end{align*}

\medskip

The case, we are going to deal with in this section, requires no structure to be defined on the set $M$ (we call such cases \uvz{set-like}). This distinguishes this case from the cases defined in the following section, where we will be assuming a fixed well-ordering on $M$ (\uvz{well-ordered} cases).

\subsection{The \uvz{hear nothing, see all} case}
\label{sect:hnsa}
Let us denote by $\hnsa{M}{C}{A}{e}$ the instance $\mathcal{I}=(M,C,A,S,H,e)$ with $A=\fset{M}{C}$, $S=M^2\sdiff \id{M}$ and $H=\emptyset$ (that is every mathematician sees all hats except her own but hears no previous guesses). 

This is a basic case, many subcases of which have been thoroughly analyzed elsewhere. 
For the convenience of the reader, we present here a compact selfcontained overview that gives a complete (at least from our point of view) characterization of existence of winning strategies.

\subsubsection{Case: $M$ finite}
If both $M$ and $C$ are finite, then there is a strategy that guarantees at least $\lfloor|M|/|C|\rfloor$ correct guesses, but no strategy is able to guarantee more. This was probably first proven in \cite{Winkler01} for just two colors and in \cite{Feige04} in full generality (see also \cite[Theorem 2.3.1]{HTbook}):

\begin{prop}
\label{prop:dfovMfinCfin}
Let $M,C$ be finite, $n\in\N$. Then there is a winning strategy $\sigma$ for  the instance $\hnsa{M}{C}{A}{e_{\geq n}}$ if and only if $n\leq |M|/|C|$.
\end{prop}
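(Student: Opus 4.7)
Write $N = |M|$ and $k = |C|$. My plan is to establish both implications directly: a partition-and-modular-sum construction for sufficiency, and an averaging argument over assignments for necessity.

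For the ``if'' direction, assuming $n \leq N/k$ (equivalently $nk \leq N$), identify $C$ with $\mathbb{Z}/k\mathbb{Z}$. Pick pairwise disjoint blocks $G_1, \ldots, G_n \sbs M$ each of size $k$, and for each $j$ fix a bijection $\iota_j : G_j \to \{0, \ldots, k-1\}$. The proposed strategy is: every $m \in G_j$ guesses the unique color $c \in C$ satisfying
\[
c + \sum_{m' \in G_j \setminus \{m\}} a(m') \equiv \iota_j(m) \pmod k;
\]
mathematicians outside $\bigcup_j G_j$ may guess arbitrarily. Because $m$ sees $a(m')$ for all $m' \neq m$, this is well defined. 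In each block $G_j$, the congruence $\iota_j(m) \equiv \sum_{m' \in G_j} a(m') \pmod k$ holds for exactly one $m \in G_j$, which is then the unique correct guesser in her block; summing over $j = 1, \ldots, n$ yields at least $n$ correct guesses for every $a \in A$.

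For the ``only if'' direction, assume $n > N/k$ and fix an arbitrary strategy $\sigma$. Set $c(a) := |\{m \in M : a(m) = \guessf_{a,\sigma}(m)\}|$. Since $H = \emptyset$ and $S^{-1}[m] = M \setminus \{m\}$, the value $\guessf_{a,\sigma}(m)$ depends only on $a \restr (M \setminus \{m\})$. Hence for each $m$ and each such restriction there is exactly one extension of it to $A$ that makes $a(m)$ equal the prescribed guess. Interchanging sums,
\[
\sum_{a \in A} c(a) \;=\; \sum_{m \in M}\, \sum_{a \in A}\, [\, a(m) = \guessf_{a,\sigma}(m)\,] \;=\; N \cdot k^{N-1},
\]
so the mean of $c$ over $A$ equals $N/k$. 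Pick $a^* \in A$ with $c(a^*) \leq N/k < n$; then $c(a^*) < n$, so $\sigma$ is not winning.

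Neither direction should present a real obstacle; both are standard arguments. The conceptual pivot worth highlighting is that in the ``hear nothing, see all'' setting each mathematician's guess depends only on the colors she does \emph{not} wear, which is precisely what makes the inner sum in the averaging step assignment-independent and equal to $k^{N-1}$; the modular strategy, in turn, converts this tightness into a matching lower bound of exactly one correct guesser per $k$-block.
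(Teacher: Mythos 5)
Your proof is correct and follows essentially the same route as the paper's: the sufficiency direction uses the same partition into $n$ blocks of size $|C|$ with a modular-sum strategy yielding exactly one correct guesser per block (the paper merely packages this via its combination-of-strategies lemma after reducing to the case $|M|=|C|$, $n=1$), and the necessity direction is the same double-counting over pairs $(a,m)$, which you phrase as an exact computation of the average number of correct guesses ($N/k$) where the paper derives a pigeonhole contradiction. Both arguments are sound, including the key observation that the guess at $m$ depends only on $a\restr(M\setminus\{m\})$, so no changes are needed.
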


\begin{proof}
\uvz{$\KAP$}: It is enough to find a winning strategy for $\hnsa{M'}{C'}{\fset{M'}{C'}}{e_{\geq 1}}$ where $|M'|=|C'|$. Indeed, $M$ can be divided into $n$ disjoint sets $M_0,\ldots,M_{n-1}$ of size $|C|$ and a possible leftover (as $n\leq |M|/|C|$). The combination $\sigma=\bigsqcup_{i<n}\sigma_i$ of winning strategies $\sigma_i$ for $\hnsa{M_i}{C}{\fset{M_i}{C}}{e_{\geq 1}}$ is then a winning strategy for $\hnsa{M}{C}{A}{e_{\geq n}}$. (There is at least one correct guess on each of $n$ disjoint sets $M_i$.)

Let us therefore assume that $|M|=|C|$ and $n=1$. Without loss of generality, we can have $M=C\in \N$. Define, for $m\in M$ and $\alpha:M\sdiff\lr{m}\rightarrow C$, the value of $\sigma$ as 
$$\sigma(m,\alpha,\emptyset) = m-\sum_{m'\neq m}\alpha(m'),$$ 
where the right side is computed modulo $M$. Then, for $a\in\fset{M}{C}$, let $m_a =\sum_{m'\in M} a(m')$ (modulo $M$). We get 
$$\guessf_{a,\sigma}(m_a)=\sigma(m_a,a\restr (M\sdiff\lr{m_a}),\emptyset)= m_a - \sum_{m'\neq m_a} a(m') = a(m_a),$$
thus guaranteeing $e(a,\guessf)=1$.

\uvz{$\PAK$}: Suppose that there is a winning strategy $\sigma$ for $n>|M|/|C|$. Then 
$$|\set{(a,m)}{a\in A, m\in M, a(m)=\guessf_{a,\sigma}(m)}|\geq n|A|>\frac{|M||A|}{|C|},$$ 
hence for some $m\in M$ we get 
$$|\set{a\in A}{a(m)=\guessf_{a,\sigma}(m)}|>|A|/|C|$$ 
and consequently 
$$|\set{a\in A}{a\supseteq a',a(m)=\guessf_{a,\sigma}(m)}|>1$$
for some $a':M\sdiff\lr{m}\rightarrow C$. But that means that for some $a_0\neq a_1$ from $A$ such that $a_i\supseteq a'$ for $i<2$ we have
$a_0(m)=\guessf_{a_0,\sigma}(m)=\sigma(m,a',\emptyset)=\guessf_{a_1,\sigma}(m)=a_1(m)$
and thus $a_0=a_1$ --- a contradiction.
\end{proof}

For infinite $C$, no strategy can guarantee even one correct guess:

\begin{prop}
Let $M$ be finite, $C$ be infinite. Then there is no winning strategy for $\hnsa{M}{C}{A}{e_{\geq 1}}$.
\end{prop}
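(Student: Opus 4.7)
The plan is, given any strategy $\sigma$ for $\hnsa{M}{C}{A}{e_{\geq 1}}$, to exhibit an assignment $a\in A$ on which \emph{no} mathematician guesses correctly. The key observation is that, since $H=\emptyset$, each mathematician's guess depends only on what the others are wearing, so on any restricted color palette the set of $a$'s on which a given $m$ guesses correctly has ``codimension one''. Once the palette is artificially shrunk to slightly more colors than mathematicians, a simple union bound should then rule out correctness for all $|M|$ mathematicians simultaneously.

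Concretely, I would let $k=|M|$ and pick $k+1$ distinct colors $c_0,\ldots,c_k\in C$ (possible since $C$ is infinite); put $C_0=\lr{c_0,\ldots,c_k}$ and $A_0=\fset{M}{C_0}\sbs A$, so $|A_0|=(k+1)^k$. For each $m\in M$ define $B_m=\set{a\in A_0}{\sigma(m,a\restr (M\sdiff\lr{m}),\emptyset)=a(m)}$. On any $a\in B_m$ the value $a(m)$ is completely determined by $a\restr (M\sdiff\lr{m})$ (it must equal $\sigma(m,a\restr (M\sdiff\lr{m}),\emptyset)$, and moreover this value has to lie in $C_0$), so $|B_m|\leq (k+1)^{k-1}$. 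A union bound then gives $|\Suni_{m\in M}B_m|\leq k(k+1)^{k-1}<(k+1)^k=|A_0|$, and any $a\in A_0$ outside every $B_m$ witnesses that $\sigma$ is not winning.

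I do not foresee any real obstacle; the only bookkeeping subtlety is that $\sigma$ may prescribe colors outside $C_0$ even on inputs coming from $A_0$, but such guesses are automatically wrong for $a\in A_0$, which only tightens the estimate on $|B_m|$. Equivalently, the statement reduces to the $\PAK$ direction of Proposition~\ref{prop:dfovMfinCfin} applied to the sub-instance $\hnsa{M}{C_0}{\fset{M}{C_0}}{e_{\geq 1}}$, whose hypothesis $1\leq |M|/|C_0|=k/(k+1)$ manifestly fails; but the direct counting above is simpler and entirely self-contained.
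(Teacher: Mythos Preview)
Your argument is correct and follows essentially the same route as the paper: restrict attention to a finite palette $C_0\subseteq C$ with $|C_0|=|M|+1$ and invoke the counting behind Proposition~\ref{prop:dfovMfinCfin}. The paper's proof is the same in spirit, only packaged as a one-line reduction to that proposition rather than unrolling the union bound explicitly; you have also made explicit the harmless subtlety that $\sigma$ may output colors outside $C_0$.
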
 
\begin{proof}
Suppose that a winning strategy $\sigma$ exists. Let $C'\sbs C$ such that $|C'|=|M|+1$. Clearly, existence of $\sigma$ implies existence of a winning strategy for $\hnsa{M}{C'}{\fset{M}{C'}}{e_{\geq 1}}$, which is impossible by the previous proposition.
\end{proof}

\subsubsection{Case: $M$ infinite}

If $|C|=1$, then obviously every strategy guarantees all guesses to be correct.
For infinite $M$ and $|C|>1$ there is a strategy that guarantees at most finitely many incorrect guesses, but no fixed finite number of incorrect guesses can be guaranteed. The implication \uvz{$\KAP$} in the following proposition is known as the Gabay-O'Connor theorem (see e.g., \cite[Theorem 3.2.2]{HTbook}):

\begin{prop}
\label{prop:hnsaMinf}
Let $M$ be infinite, $|C|>1$ and $\kappa$ a cardinal. There is a winning strategy for the instance $\hnsa{M}{C}{A}{e^{<\kappa}}$ if and only if $\kappa\geq\omega$.
\end{prop}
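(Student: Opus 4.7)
The plan is to prove the two implications separately. For $(\KAP)$, assume $\kappa \geq \omega$ and produce a strategy with only finitely many wrong guesses (the Gabay--O'Connor construction). For $(\PAK)$, assume $\kappa = n \in \N$ and derive a contradiction from a hypothetical winning strategy by reducing to the finite case already handled by Proposition~\ref{prop:dfovMfinCfin}.

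For $(\KAP)$ it suffices to treat $\kappa = \omega$. On $A = \fset{M}{C}$ define the equivalence relation $a \sim b$ iff $\set{m \in M}{a(m) \neq b(m)}$ is finite and, using the axiom of choice, fix a representative $a_E$ in every $\sim$-class $E$. The mathematician $m$ sees $\alpha = a \restr (M \sdiff \lr{m})$; since any two extensions of $\alpha$ to all of $M$ differ in at most the single coordinate $m$, they all lie in a common $\sim$-class $E(\alpha)$, which she can compute. Her strategy is $\sigma(m,\alpha,\emptyset) := a_{E(\alpha)}(m)$. Because $a$ and $a_{E(\alpha)}$ lie in the same class they differ only on finitely many coordinates, so only finitely many mathematicians err; thus $e^{<\omega}(a,\guessf_{a,\sigma}) = 1$, and hence $e^{<\kappa}(a,\guessf_{a,\sigma}) = 1$ for every $\kappa \geq \omega$.

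For $(\PAK)$, let $\kappa = n \in \N$. If $n = 0$ then $e^{<0}$ is identically $0$ and the claim is trivial; assume $n \geq 1$ and let $\sigma$ be a hypothetical winning strategy. Choose a finite $M' \sbs M$ with $|M'| = 2n$ (possible since $M$ is infinite), distinct colors $c_0, c_1 \in C$ (possible since $|C| > 1$), and fix an arbitrary $a_0 \in \fset{(M \sdiff M')}{C}$. Define a strategy $\tau$ for the finite instance $\hnsa{M'}{\lr{c_0,c_1}}{\fset{M'}{\lr{c_0,c_1}}}{e_{\geq n+1}}$ by truncating $\sigma$ to $\lr{c_0,c_1}$: for $m \in M'$ and $\alpha' \in \fset{(M' \sdiff \lr{m})}{\lr{c_0,c_1}}$ set $\tau(m,\alpha',\emptyset) := \sigma(m, a_0 \cup \alpha', \emptyset)$ if this value lies in $\lr{c_0,c_1}$, and $\tau(m,\alpha',\emptyset) := c_0$ otherwise. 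A short case analysis shows that whenever $\tau$ errs on some $m \in M'$ under a true assignment $a' \in \fset{M'}{\lr{c_0,c_1}}$, the strategy $\sigma$ also errs at $m$ under $a_0 \cup a'$: in the first clause they give the same answer, and in the second clause $\sigma$'s guess lies outside $\lr{c_0,c_1}$ whereas $a'(m) \in \lr{c_0,c_1}$, so $\sigma$ errs automatically. Consequently $\tau$ makes at most as many errors on $M'$ as $\sigma$ does on all of $M$, which is $\leq n - 1$; so $\tau$ guarantees at least $|M'| - (n-1) = n + 1$ correct guesses in the finite instance. But $|M'|/|\lr{c_0,c_1}| = n < n+1$, so Proposition~\ref{prop:dfovMfinCfin} forbids such a $\tau$---a contradiction.

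I expect the main subtlety to lie in the $(\PAK)$ direction, specifically in controlling the mathematicians outside $M'$: although their hats are fixed by $a_0$, their guesses under $\sigma$ still depend on the varying hats in $M'$ and can well all be wrong, so they cannot be incorporated directly into the induced finite strategy. This is circumvented by the observation that $\sigma$'s global error bound of $\leq n-1$ is also an upper bound for its error count restricted to $M'$, which is the only quantity the induced $\tau$ needs to beat; the rest of the argument then reduces to a clean finite-case contradiction via Proposition~\ref{prop:dfovMfinCfin}.
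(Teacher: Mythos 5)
Your proof is correct and follows essentially the same route as the paper's: the Gabay--O'Connor selector argument for \uvz{$\KAP$} and, for \uvz{$\PAK$}, the restriction to a $2n$-element subset with two colors followed by an appeal to Proposition~\ref{prop:dfovMfinCfin}. Your only addition is the explicit truncation clause handling guesses of $\sigma$ that fall outside $\lr{c_0,c_1}$ (and the trivial $\kappa=0$ case), a detail the paper's definition of $\sigma_0$ passes over silently; this is a welcome clarification but not a different argument.
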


\begin{proof}
\uvz{$\KAP$}: Clearly, it is enough to find a winning strategy for $\hnsa{M}{C}{A}{e^{<\omega}}$. Denote $\sim$ the equivalence on $A$ such that $a\sim a'$ if and only if $a(m)\neq a'(m)$ for at most finitely many $m\in M$. Let $s:\factor{A}{\sim}\rightarrow A$ be a selector (i.e. $s([a]_{\sim})\sim a$ for all $a\in A$). Then a winning strategy can be defined as 
$$
\sigma(m,\alpha,\emptyset)=s([a]_{\sim})(m),
$$
for $m\in M$ and $\alpha:M\sdiff\lr{m}\rightarrow C$, where $a\in A$ is arbitrary such that $\alpha\sbs a$ (the definition is correct as any $a,a'\supseteq\alpha$ may have different value only at $m$ and thus $a\sim a'$). Indeed, $\sigma$ is winning, as for all but finitely many $m\in M$ we get 
$\guessf_{a,\sigma}(m)=\sigma(m,a\restr(M\sdiff\lr{m}),\emptyset)=s([a]_{\sim})(m)=a(m).$

\uvz{$\PAK$}: Suppose that there is a winning strategy $\sigma$ for $\hnsa{M}{C}{A}{e^{<n}}$ for some $n\in\N$. Choose $C'=\lr{c_0,c_1}\sbs C$ with $c_0\neq c_1$ and $M'\sbs M$ such that $|M'|=2n$.
Define a strategy $\sigma_0$ for $\hnsa{M'}{C'}{\fset{M'}{C'}}{e^{<n}}$ by $\sigma_0(m,\alpha,\emptyset)=\sigma(m,\alpha\suni ((M\sdiff M')\times \lr{c_0}),\emptyset)$ for $m\in M'$ and $\alpha:M'\sdiff\lr{m}\rightarrow C'$. Since $\sigma$ is winning, $\sigma_0$ is as well. That means that for each $a\in \fset{M'}{C'}$, we have $|\set{m\in M'}{a(m)=\guessf_{a,\sigma_0}(m)}|>|M'|-n = n$ and thus $\sigma_0$ is winning also for $\hnsa{M'}{C'}{\fset{M'}{C'}}{e_{\geq n+1}}$. But by Proposition \ref{prop:dfovMfinCfin} there is no winning strategy for $\hnsa{M'}{C'}{\fset{M'}{C'}}{e_{\geq k}}$ with $k>|M'|/|C'|=n$ --- a contradiction.
\end{proof}

\section{Well-ordered cases}

We now move to cases where the set $M$ is thought of as well-ordered and $S$ and $H$ are defined through this ordering.

Let $\M=\stru{M}{\prec}$ be a nonempty well-ordering. By $\succ$ we mean $\prec^{-1}$. We define two subcases of the problem of prisoners and hats -- the \uvz{hear nothing, see forward} case and the \uvz{hear backward, see-forward} case:
\begin{itemize}
\item $\hnsf{\M}{C}{A}{e}=(M,C,A,S,H,e)$ where $A=\fset{M}{C}$, $S=\succ$ and $H=\emptyset$,
\item $\hbsf{\M}{C}{A}{e}=(M,C,A,S,H,e)$ where $A=\fset{M}{C}$, $S=\succ$ and $H=\prec$.
\end{itemize}

In less formal words, the mathematicians are standing in a (well-ordered) line facing the (possibly infinite) tail of the line. In $\hnsf{\M}{C}{A}{e}$ every mathematician sees all hats in front of her but hears nothing, while in $\hbsf{\M}{C}{A}{e}$ she sees all hats in front of her and hears all guesses of her colleagues behind her.

Further on we assume (without loss of generality) that $M$ is a nonzero ordinal number and $\prec$ is the usual ordering $<$ of ordinals, i.e., $\M=\stru{\beta}{<}$ for an ordinal $\beta>0$.

\subsection{The \uvz{hear nothing, see forward} case}
\label{sect:hnsf}

\subsubsection{Case: $M$ finite}

For finite $M$ and $|C|>1$ no strategy guarantees even one correct guess:

\begin{prop}
\label{prop:hnsfMfin}
Let $|C|>1$ and $\M=\stru{n}{<}$ for $0<n\in\N$. Then there is no winning strategy for the instance $\hnsf{\M}{C}{A}{e_{\geq 1}}$.
\end{prop}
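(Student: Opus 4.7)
The plan is a direct adversarial construction: given any strategy $\sigma$, the hatter can choose an assignment $a \in \fset{n}{C}$ making every single guess wrong, so in particular $e_{\geq 1}(a,\guessf_{a,\sigma}) = 0$. The key observation is that $H = \emptyset$, so the Lemma of Section~\ref{sect:theproblem} collapses to
$$\guessf_{a,\sigma}(m) = \sigma(m, a\restr S^{-1}[m], \emptyset)$$
for every $m < n$; here $S = {>}$ gives $S^{-1}[m] = \set{m' < n}{m' > m}$. In particular each mathematician's guess is determined by the hats strictly in front of her, with no recursive dependence on other guesses, so the guess map decouples into $n$ independent single-mathematician decisions.

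I would then define $a$ by reverse induction on $m$, from $n-1$ down to $0$. For the base case $m = n-1$ one has $S^{-1}[m] = \emptyset$, so $\sigma(n-1, \emptyset, \emptyset)$ is a fixed color in $C$; since $|C| > 1$, pick $a(n-1)$ to be any color different from it. For $m < n-1$, assume $a(m+1), \ldots, a(n-1)$ have already been fixed; then $a\restr S^{-1}[m]$ is fully determined and $\sigma(m, a\restr S^{-1}[m], \emptyset)$ is a single color in $C$, so choose $a(m)$ distinct from it. The resulting $a \in \fset{n}{C}$ satisfies $a(m) \neq \guessf_{a,\sigma}(m)$ for every $m < n$, contradicting the assumption that $\sigma$ is winning.

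There is essentially no genuine obstacle here: the absence of audible information ($H = \emptyset$) decouples the guess function into independent pointwise decisions, and the forward-only visibility ($S = {>}$) means that by the time we fix $a(m)$, every hat that the mathematician at $m$ will ever see has already been assigned, so her guess is a known element of $C$. The hypothesis $|C| > 1$ is used precisely at every inductive step to guarantee that some color different from $\sigma(m, a\restr S^{-1}[m], \emptyset)$ is available; without it the constant strategy trivially wins.
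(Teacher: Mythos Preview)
Your proof is correct and follows essentially the same approach as the paper: a reverse induction on $m$ from $n-1$ down to $0$, at each step choosing $a(m)$ to differ from the (now fully determined) guess $\sigma(m,a\restr\{m+1,\ldots,n-1\},\emptyset)$, using $|C|>1$. Your explicit remarks about $H=\emptyset$ decoupling the guesses and forward visibility ensuring determinacy at each step are helpful but do not change the underlying argument.
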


\begin{proof}
Assume, for contradiction, that $\sigma$ is a winning strategy for $\hnsf{\M}{C}{A}{e_{\geq 1}}$. We construct an assignment $a\in A$ such that $g_{a,\sigma}(m)\neq a(m)$ for all $m\in M$.
This can be done easily by reverse induction on $n$: In the $(n-i)$th step we set $a(i)\neq\sigma(i,a\restr\lr{i+1,\ldots,n-1},\emptyset)=g_{a,\sigma}(i)$, which is possible thanks to $|C|>1$.
\end{proof}

\subsubsection{Case: $M$ infinite} 
If $\M=\stru{\alpha}{<}$ for $\alpha$ infinite, 
then the situation is similar to the \uvz{hear nothing, see all} case --- at most finitely many incorrect guesses can be guaranteed, but there is no finite upper bound on their number. The idea of the proof of the implication \uvz{$\KAP$} in the following proposition comes from \cite[Theorem 4.2.1]{HTbook}, however our setting is different:

\begin{prop}
Let $|C|>1$, $\M=\stru{\beta}{<}$ for an ordinal $\beta\geq\omega$, and $\kappa$ be a cardinal. There is a winning strategy for the instance $\hnsf{\M}{C}{A}{e^{<\kappa}}$ if and only if $\kappa\geq\omega$.
\end{prop}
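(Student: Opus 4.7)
The plan is to prove both implications, with the ``$\Rightarrow$'' direction an easy reduction mirroring the corresponding part of Proposition~\ref{prop:hnsaMinf}, and the ``$\Leftarrow$'' direction (actually constructing a strategy with finitely many errors) the substantive part, which I would attack by transfinite induction on $\beta$.

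For ``$\Rightarrow$'' I would suppose that $\kappa = n < \omega$ admits a winning strategy $\sigma$, restrict attention to the finite initial segment $M' = \{0, 1, \ldots, 2n-1\} \subseteq \beta$, and fix the remaining tail $a\restr[2n,\beta)$ to a constant $c_0 \in C$. The induced strategy $\sigma_0(m,\alpha,\emptyset) = \sigma(m, \alpha \cup ([2n,\beta)\times\{c_0\}), \emptyset)$ on the finite well-ordered $\stru{M'}{<}$ would then guarantee fewer than $n$ errors on $|M'|=2n$ positions, hence at least one correct guess --- contradicting Proposition~\ref{prop:hnsfMfin}.

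For ``$\Leftarrow$'' it suffices to produce a strategy with only finitely many errors. The base case $\beta=\omega$ is the Gabay--O'Connor construction from Proposition~\ref{prop:hnsaMinf}: fix on $\fset{\omega}{C}$ the equivalence $\sim$ of ``cofinite agreement'' together with a selector $s$, and have mathematician $n$ guess $s([a]_\sim)(n)$; her cofinite view $a\restr(n,\omega)$ determines $[a]_\sim$, so the errors form a finite set. For $\beta>\omega$ I would partition $[0,\beta)$ into an $\omega$-sequence of blocks each of order type strictly smaller than $\beta$ --- using $\beta=\omega\cdot\lambda+r$ when $\lambda<\beta$, and otherwise (e.g.\ for additively indecomposable $\beta=\omega^\omega$) an $\omega$-cofinal fundamental sequence $0=\alpha_0<\alpha_1<\ldots$ with $\sup_n\alpha_n=\beta$, setting $B_n=[\alpha_n,\alpha_{n+1})$. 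By the induction hypothesis, each block admits a within-block strategy $\tau_n$ with only finitely many errors. The global strategy is a two-level hybrid: at the super-level, run a Gabay--O'Connor-style selector on the $\omega$ super-mathematicians with super-alphabet $\prod_n C^{B_n}$, guaranteeing that at most finitely many super-blocks carry an incorrect super-guess; each real mathematician $m\in B_n$ then computes both her super-guess $g_n\in C^{B_n}$ (possible because she sees all later blocks entirely) and the within-block fallback $\tau_n(m)$, and she plays $g_n(m)$ if $g_n$ agrees with her cofinite view of $B_n$, and $\tau_n(m)$ otherwise.

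The verification will be that in super-correct blocks the visible parts always match and all guesses are correct, while in the finitely many super-erroneous blocks the hybrid falls back to $\tau_n$ and contributes only finitely many errors per block, giving --- together with at most $r$ errors from the finite remainder --- a finite total. The step I expect to be the main obstacle is arranging the block decomposition so that \emph{every} block genuinely has order type strictly less than $\beta$, so that the induction hypothesis really applies: this requires separate care for additively indecomposable $\beta$ (where $\omega\cdot\lambda=\beta$) and for $\beta$ of uncountable cofinality, and is where the proof will need to invoke cofinal sequences rather than the naive division-with-remainder decomposition.
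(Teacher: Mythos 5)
Your ``$\Rightarrow$'' direction is fine and is essentially the paper's own reduction (the paper restricts to an initial segment of size $n$ rather than $2n$, but both give at least one forced correct guess and contradict Proposition~\ref{prop:hnsfMfin}). The genuine gap is in ``$\Leftarrow$'', and it is exactly the obstacle you flagged but did not overcome: the transfinite induction breaks down at ordinals of uncountable cofinality. If $\beta$ is additively indecomposable with $\mathrm{cf}(\beta)>\omega$ (the first example being $\beta=\omega_1$), then \emph{no} partition of $[0,\beta)$ into an $\omega$-sequence of intervals of order type $<\beta$ exists, since the suprema of countably many such intervals cannot be cofinal in $\beta$; and for regular uncountable $\beta$ any interval decomposition into pieces of smaller order type necessarily has $\beta$ many pieces, so the super-level instance is order-isomorphic to the original one and the induction does not descend. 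Your two-level hybrid is sound when $\mathrm{cf}(\beta)=\omega$ (the per-block error count in a super-erroneous block does come out finite, even though different members of the block may make different fallback decisions), and finite concatenations of such pieces can be handled by combining finitely many block strategies, but the proposition is stated for arbitrary $\beta\geq\omega$, so the case $\beta=\omega_1$ is left unproved and cannot be reached by your scheme.

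The missing idea --- which is the paper's proof and renders the induction unnecessary --- is to replace the arbitrary selector by the \emph{least-element} selector of a fixed well-ordering $<^A$ of $A=\fset{M}{C}$. For each $m$ let $a\sim_m a'$ iff $a,a'$ agree at all $m'>m$; mathematician $m$ determines $[a]_{\sim_m}$ from her view and guesses $s([a]_{\sim_m})(m)$, where $s(X)$ is the $<^A$-least element of $X$. The classes are nested, $m<m'$ implies $[a]_{\sim_m}\sbs[a]_{\sim_{m'}}$, hence $s([a]_{\sim_{m'}})\leq^A s([a]_{\sim_m})$; and an incorrect guess at $m'$ forces this inequality to be strict, because equality would give $s([a]_{\sim_{m'}})(m')=s([a]_{\sim_m})(m')=a(m')$ (as $s([a]_{\sim_m})\sim_m a$). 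An infinite set of errors would thus produce an infinite strictly $<^A$-descending sequence of representatives, contradicting well-foundedness. This argument is uniform in $\beta$ and requires no case distinction on cofinality.
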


\begin{proof}
Analogous to the proof of Proposition \ref{prop:hnsaMinf}. 

\uvz{$\KAP$}: Clearly, it is enough to find a winning strategy $\sigma$ for $\hnsf{\M}{C}{A}{e^{<\omega}}$: For $m\in M$ we define the equivalence $\sim_m$ on $A=\fset{M}{C}$ by $a\sim_m a'$ if and only if $a(m')=a'(m')$ for all $m' > m$ (that is for all $m'$ that $m$ can see). We denote by $s$ the selector on $\poten{A}\sdiff\lr{\emptyset}$ that for $\emptyset \neq X\sbs A$ selects $s(X)$ as the $<^A$-least element of $X$, where $<^A$ is a fixed well-ordering of $A$. The strategy $\sigma$ can then be defined as 
$$
\sigma(m,\alpha,\emptyset)=s([a]_{\sim_m})(m),
$$
for $m\in M$ and $\alpha:\set{m'\in M}{m < m'}\rightarrow C$, where $a \in A$ is arbitrary such that $\alpha\sbs a$ (this is correct as any two such $a$'s are $\sim_m$-equivalent).

We prove that $\sigma$ is winning for $\hnsf{\M}{C}{A}{e^{<\omega}}$: For contradiction, let there be $a\in A$ and $M'\sbs M$ infinite such that for all $m\in M'$
\begin{equation}
\label{eq:proofhnsfinfkap}
a(m)\neq g_{a,\sigma}(m) = \sigma(m,a\restr\set{m'\in M}{m < m'},\emptyset)=s([a]_{\sim_m})(m).
\end{equation}

Let us choose an increasing sequence $m_0 < m_1 < \cdots$ of length $\omega$ in $M'$. We show that then $s([a]_{\sim_{m_0}})>^A s([a]_{\sim_{m_1}}) >^A \cdots$ is a strictly $<^A$-decreasing sequence in $A$, which is in contradiction with $<^A$ being a well-ordering:
From $m_i<m_{i+1}$ it follows $(\forall a'\in A)(a\sim_{m_i}a' \limp a\sim_{m_{i+1}}a')$, i.e., $[a]_{\sim_{m_i}}\sbs [a]_{\sim_{m_{i+1}}}$ and thus $s([a]_{\sim_{m_i}})\geq^A s([a]_{\sim_{m_{i+1}}})$ by the definition of $s$. But the equality is not possible since that would mean $g_{a,\sigma}(m_{i+1})=s([a]_{\sim_{m_{i+1}}})(m_{i+1}) = s([a]_{\sim_{m_i}})(m_{i+1})=a(m_{i+1})$, contradicting \eqref{eq:proofhnsfinfkap}.

\uvz{$\PAK$}: Assume for contradiction that there is a winning strategy $\sigma$ for $\hnsf{\M}{C}{A}{e^{<n}}$ for some $0<n\in\N$. Then we can take $\M'=\stru{n}{<}\sbs\M$ and
define a strategy $\sigma_0$ for $\hnsf{\M'}{C}{\fset{M'}{C}}{e^{<n}}$ by $\sigma_0(m,\alpha,\emptyset)=\sigma(m,\alpha\suni ((M\sdiff M')\times \lr{c_0}),\emptyset)$ for $m\in M'$ and $\alpha:\set{m'\in M'}{m < m'}\rightarrow C$, where $c_0\in C$ is fixed. From our assumption on $\sigma$ it follows that $\sigma_0$ is winning for $\hnsf{\M'}{C}{\fset{M'}{C}}{e^{<n}}$, and thus (because $|M'|=n$) also for $\hnsf{\M'}{C}{\fset{M'}{C}}{e_{\geq 1}}$. This contradicts Proposition \ref{prop:hnsfMfin}.
\end{proof}

\subsection{The \uvz{hear backward, see forward} case}
\label{sect:hbsf}

The approach we are going to present is an algebraic one. All the theorems that we use and that go beyond an elementary course of algebra can be found in \cite{Zie84}. 
The author owes the credit for the idea to Jan \v{S}aroch \cite{SarochNote}. 

Before stating the crucial Lemma \ref{lm:extendingsum}, let us recall some notions concerning Abelian groups. For an (additively written) Abelian group $G$ and an ordinal $\delta$, we denote by $G^\delta$ the product of $\delta$ copies of $G$, that is the group whose underlying set is $\fset{\delta}{G}=\set{f}{f:\delta\rightarrow G}$ and the group operation is defined pointwise. By $G^{(\delta)}$ we denote the direct sum of $\delta$ copies of $G$, that is the subgroup of $G^\delta$ whose underlying set is $\set{f}{f:\delta\rightarrow G\text{ and } f(\alpha)=0 \text{ for all but finitely many }\alpha\in\delta}$. There is the natural sum homomorphism $\Sigma:G^{(\delta)}\rightarrow G$ defined by $\Sigma(f)=f(\alpha_0)+\cdots +f(\alpha_{n-1})$ where $\alpha_{0},\ldots,\alpha_{n-1}$ is an enumeration of all $\alpha\in\delta$ for which $f(\alpha)\neq 0$ and the sum is computed in $G$.

\begin{lemma}
\label{lm:extendingsum}
For any ordinal $\delta>0$ and cardinal $\mu>0$ there is an Abelian group $G$ of size $\mu$ such that the sum homomorphism $\Sigma:G^{(\delta)}\rightarrow G$ can be extended to a homomorphism $\Sigma':G^\delta\rightarrow G$.
\end{lemma}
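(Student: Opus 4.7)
The plan is to reduce the extension problem to a pure-injectivity argument. Recall from \cite{Zie84} that an abelian group $G$ is \emph{pure-injective} (equivalently, algebraically compact) when every homomorphism from a pure subgroup of any abelian group to $G$ admits an extension to the whole group. Hence, provided $G$ is pure-injective and $G^{(\delta)}$ is pure in $G^\delta$, the homomorphism $\Sigma:G^{(\delta)}\to G$ extends automatically to the desired $\Sigma':G^\delta\to G$.

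The first step is to verify that $G^{(\delta)}$ is pure in $G^\delta$ for every abelian group $G$. For abelian groups, $N\sbs L$ is pure iff $nL\sint N = nN$ for every $n\in\N$. This is elementary in our case: if a finitely supported tuple $x\in G^{(\delta)}$ can be written as $x=ny$ for some $y\in G^\delta$, then setting $z$ equal to $y$ on the (finite) support of $x$ and to $0$ elsewhere yields $z\in G^{(\delta)}$ with $nz=x$.

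The second step is to produce a pure-injective abelian group $G$ of cardinality exactly $\mu$. For $\mu$ infinite we take $G=\Q^{(\mu)}$; this has cardinality $\mu$ and is divisible, hence injective in the category of abelian groups, and \emph{a fortiori} pure-injective. For $\mu$ finite and positive we take $G=\Z/\mu\Z$, which is pure-injective since every bounded (in particular every finite) abelian group is so, by the classical structure theory of algebraically compact groups recalled in \cite{Zie84}; the subcase $\mu=1$ is of course trivial. Applying pure-injectivity of such a $G$ to the pure inclusion $G^{(\delta)}\sbs G^\delta$ and to $\Sigma$ yields the required $\Sigma'$. I expect the only delicate point to be pinning down the correct references in \cite{Zie84} for pure-injectivity of divisible and of bounded abelian groups; the module-theoretic content beyond that is routine.
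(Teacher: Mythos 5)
Your proposal is correct and follows essentially the same route as the paper: the same two groups ($\Q^{(\mu)}$ for infinite $\mu$, $\Z/\mu\Z$ for finite $\mu$) and the same extension mechanism via (pure-)injectivity of $G$ applied to the pure inclusion $G^{(\delta)}\sbs G^\delta$. The only cosmetic differences are that you verify purity of the inclusion by the elementary $nL\sint N=nN$ criterion where the paper cites \cite{Zie84} (noting the inclusion is even elementary), and you phrase the infinite case uniformly through pure-injectivity rather than invoking injectivity directly.
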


\begin{proof}
For $\mu\geq \omega$, let $G$ be a divisible Abelian group of size $\mu$, e.g. $G=\Q^{(\mu)}$. Then $G$ is injective in the category of Abelian groups and thus $\Sigma$ factors through the inclusion $\id{G^{(\delta)}}:G^{(\delta)}\rightarrow G^\delta$, yielding the required $\Sigma'$.

For $1<\mu<\omega$, we take $G=\Z/\mu\Z$ (the additive group of integers modulo $\mu$). Then $G$ is purely injective (because finite). The inclusion $\id{G^{(\delta)}}:G^{(\delta)}\rightarrow G^\delta$ is pure (even an elementary embedding, see \cite[Corollary 1.8]{Zie84}) and so $\Sigma$ factors through it, yielding $\Sigma'$ as before.
\end{proof}

Now we are ready to show that for the \uvz{hear backward, see forward} case there is always a strategy that guarantees at most one incorrect guess:

\begin{prop}
Let $|C|>1$, $\M=\stru{M}{\prec}$ be arbitrary, and $\kappa$ be a cardinal. There is a winning strategy for $\hbsf{\M}{C}{A}{e^{<\kappa}}$ if and only if $\kappa\geq 2$.
\end{prop}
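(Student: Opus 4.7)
The statement is an iff, so I would tackle the two directions separately. The direction that $\kappa \geq 2$ is necessary is easy: for $\kappa = 0$ the condition $e^{<0}$ is unsatisfiable, and for $\kappa = 1$ I would observe that the $\prec$-least mathematician hears nothing (no element precedes $0$ in $\prec$), so her guess is a fixed function of the restriction $a \restr \{m' : m' \succ 0\}$. The hatter can freely choose that restriction first, compute her forced guess $c$, and finally set $a(0) \neq c$, which is possible since $|C| > 1$. This produces at least one incorrect guess and defeats any strategy.

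For the hard direction, the plan is to construct a strategy winning for $\hbsf{\M}{C}{A}{e^{<2}}$, since this implies winning for every $e^{<\kappa}$ with $\kappa \geq 2$. I would assume without loss of generality that $\M = \stru{\beta}{<}$ for an ordinal $\beta > 0$ and apply Lemma \ref{lm:extendingsum} with $\delta = \beta$ and $\mu = |C|$ to obtain an abelian group $G$ of cardinality $|C|$ such that the sum homomorphism $\Sigma : G^{(\beta)} \to G$ extends to a homomorphism $\Sigma' : G^\beta \to G$. Identifying $C$ with the underlying set of $G$ via a bijection turns every assignment $a \in A$ into an element of $G^\beta$.

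The strategy uses $\Sigma'$ as a stand-in for a transfinite sum. Mathematician $0$ announces $g(0) = \Sigma'(a^{(0)})$, where $a^{(0)}$ agrees with $a$ except $a^{(0)}(0) = 0$; she can compute this from the values she sees. Each subsequent mathematician $\alpha > 0$ announces
$$g(\alpha) = g(0) - \Sigma'(\hat{a}_\alpha),$$
where $\hat{a}_\alpha \in G^\beta$ is assembled from the heard guesses $g(m')$ for $0 < m' < \alpha$, the seen values $a(m')$ for $m' > \alpha$, and zeros at positions $0$ and $\alpha$. I would then verify by transfinite induction on $\alpha > 0$ that $g(\alpha) = a(\alpha)$: the inductive hypothesis $g(m') = a(m')$ for $0 < m' < \alpha$ forces $\hat{a}_\alpha$ to equal the element $a^{(0,\alpha)}$ obtained from $a$ by zeroing positions $0$ and $\alpha$; since $a^{(0)} - a^{(0,\alpha)}$ is finitely supported, concentrated at $\alpha$ with value $a(\alpha)$, linearity of $\Sigma'$ gives $\Sigma'(a^{(0)}) - \Sigma'(a^{(0,\alpha)}) = a(\alpha)$, hence $g(\alpha) = a(\alpha)$. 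Only $g(0)$ can be wrong, so the strategy wins for $e^{<2}$.

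The main obstacle, and the reason for invoking the previous lemma, is making sense of the transfinite "parity" that the classical finite hat-parity trick relies on: one wants to encode in $g(0)$ the sum of all other hats, but no such sum exists in general on $G^\beta$. The extension $\Sigma'$ supplied by Lemma \ref{lm:extendingsum} is the precise algebraic substitute, and once it is available the induction above is routine.
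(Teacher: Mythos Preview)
Your proposal is correct and follows essentially the same approach as the paper: both directions match, and the sufficiency argument uses Lemma~\ref{lm:extendingsum} to obtain an extended sum $\Sigma'$, has the first mathematician announce $\Sigma'$ of (a zeroed version of) the assignment, and proves by transfinite induction that every later guess is correct. The only cosmetic difference is that the paper places the first mathematician at a separate index $-1$ outside the ordinal $\delta$ (so $\Sigma'$ acts on $G^\delta$ with no need to zero a coordinate), whereas you keep her at position $0$ inside $\beta$ and zero out that coordinate; both variants yield the same computation.
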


\begin{proof}
\uvz{$\PAK$}: Suppose for contradiction that there is a winning strategy $\sigma$ for the instance $\hbsf{\M}{C}{A}{e^{<1}}$, that is that $\sigma$ guarantees all guesses correct. Then for the $\prec$-least element $m\in M$ we get $a(m)=g_{a,\sigma}(m)=\sigma(m,\alpha,\emptyset)=g_{a',\sigma}(m)=a'(m)$ for any $\alpha:M\sdiff\lr{m}\rightarrow C$ and $\alpha\subseteq a,a':M\rightarrow C$. This is of course a contradiction as we may take $a,a'$ such that $a(m)\neq a'(m)$.

\uvz{$\KAP$}: It is enough to find a winning strategy for $\hbsf{\M}{C}{A}{e^{<2}}$. The case $|M|=1$ is trivial. Further on we assume $|M|>1$. Without loss of generality suppose that $M=\delta\suni\lr{-1}$, for an ordinal $\delta>0$, $\prec$ is the usual ordering of $\delta\suni\lr{-1}$ and $C=\mu>0$ a cardinal. By Lemma \ref{lm:extendingsum} there are an Abelian group $G$ of size $\mu$ and a homomorphism $\Sigma':G^{\delta}\rightarrow G$ extending the sum homomorphism $\Sigma: G^{(\delta)}\rightarrow G$. Again, without loss of generality, we may assume that the underlying set of $G$ is $\mu=C$. 

We define $\sigma$ as follows: 
$$\sigma(-1,\alpha,\emptyset)=\Sigma'(\alpha)\text{ for }\alpha\in G^\delta=\fset{M}{C}\text{, and }\sigma(\beta,\alpha,\gamma)=\gamma(-1)-\Sigma'(\alpha\suni\lr{(\beta,0)}\suni\gamma\restr\beta),$$
for $\beta\in\delta$, $\alpha:\set{\beta'}{\beta<\beta'\in\delta}\rightarrow C$, and $\gamma:\beta\suni\lr{-1}\rightarrow C$, where the subtraction is computed in $G$.

We prove that 
\begin{equation}
\label{eq:proofhbsf}
g_{a,\sigma}(\beta)=a(\beta)
\end{equation}
 for all $\beta\in\delta$ and $a\in A$ by induction on $\beta$: First, $g_{a,\sigma}(-1)=\sigma(-1,a\restr\delta,\emptyset)=\Sigma'(a\restr\delta)$ (which may differ from $a(-1)$). For the inductive step, let us assume \eqref{eq:proofhbsf} for all $\beta'<\beta$. Then 
\begin{align*}
g_{a,\sigma}(\beta) &=\sigma(\beta,a\restr\set{\beta'}{\beta<\beta'\in\delta},g_{a,\sigma}\restr(\beta\suni\lr{-1}))= \\
&=g_{a,\sigma}(-1)-\Sigma'(a_\beta)= \\ 
&=\Sigma'(a\restr\delta)-\Sigma'(a_\beta)= \\
&=\Sigma'(a\restr\delta-a_\beta)= \\
&=\Sigma(a\restr\delta-a_\beta)= \\
&=a(\beta),
\end{align*} 
where $a_\beta=a\restr(\delta\sdiff\lr{\beta})\suni\lr{(\beta,0)}$, the first equality is just definition of $g_{a,\sigma}$, the second one holds due to the inductive assumption, the third one holds by the \uvz{$-1$st step}, and the last three follow from the properties of $\Sigma$ and $\Sigma'$ and the definition of $a_\beta$.
\end{proof}


\bibliography{bibliography}{}
\bibliographystyle{amsalpha}

\end{document}